\title[The inner product of complex forms]
{The inner product on exterior powers\\ of a complex vector space} 
\author{Gunnar \TH\'{o}r Magn\'{u}sson}
\email{gunnarthormagnusson@gmail.com}
\def\^#1{^{[#1]}}
\def\bw#1{\bigwedge{}^{\mkern-5mu #1}\mkern2mu}
\def\I{\mathbf{I}}
\DeclareMathOperator{\tr}{tr}
\DeclareMathOperator{\id}{id}
\DeclareMathOperator{\Ric}{Ric}
\DeclareMathOperator{\End}{End}
\def\RR{\mathbb{R}}
\def\curv{\frac{i}{2\pi} \Theta}
\def\la{\langle}
\def\ra{\rangle}
\newtheoremstyle{slthm}{6pt}{6pt}{\slshape}{0pt}{\bf}{}{1em}{}
\newtheoremstyle{sldef}{6pt}{6pt}{}{0pt}{\bf}{}{1em}{}
\newtheoremstyle{slrem}{6pt}{6pt}{}{0pt}{\slshape}{\hskip1ex---}{1ex}{}
\theoremstyle{slthm}
\newtheorem{theo}{Theorem}[section]
\newtheorem{prop}[theo]{Proposition}
\newtheorem{coro}[theo]{Corollary}
\theoremstyle{sldef}
\newtheorem{exam}[theo]{Example}
\theoremstyle{slrem}
\newtheorem*{rema}{Remark}
\numberwithin{equation}{section}
\def\eqalign#1{%
 \null\,\vcenter{\openup\jot\m@th
  \ialign{\strut\hfil$\displaystyle{##}$&$\displaystyle{{}##}$\hfil
      &&\hfil$\displaystyle{##}$&$\displaystyle{{}##}$\hfil\crcr#1\crcr}}\,}
\begin{document}

\begin{abstract}
We give a formula for the inner product of forms on a Hermitian vector
space in terms of linear combinations of iterates of the adjoint of the
Lefschetz operator.  As an application, we reprove the
Kobayashi--L\"{u}bke inequality for Hermite--Einstein bundles. 
\end{abstract}

\maketitle

\section*{Introduction}

Let $V$ be a complex vector space of dimension $n$, equipped with a
Hermitian inner product $h$ whose positive $(1,1)$--form we denote by
$\omega = -\operatorname{Im} h$. The inner product on $V$ induces an inner
product on the exterior algebra $\bw{*} V^*$. If we denote the Hodge star
operator by $*$, then this inner product is also defined~by%
$$
\langle u, v\rangle \, \omega\^n 
= u \wedge *\overline v,
$$ 
where $\omega\^k := \omega^k / k!$ for $k \geq 0$ and $u,v$ are elements
of the exterior algebra.

There are two cases where we can easily calculate this inner product
without writing $u$ and $v$ in local coordinates and painfully calculating
minors of the resulting matrices: If $v$ is a primitive
$(p,q)$--form, then
$$
\langle u, v\rangle \, \omega\^n 
= u \wedge \overline v \wedge \omega\^{n-p-q};
$$ 
and if $u$ and $v$ are $(1,1)$--forms then we eventually find that
$$
\langle u, v\rangle \, \omega\^n 
= - u \wedge v \wedge \omega\^{n-2}
+\Lambda u \wedge \overline{\Lambda v} \wedge \omega\^{n}
$$
by decomposing the forms into primitive components. These formulas are 
no better for explicit calculations than the ones involving minors of
matrices, but they come in handy when calculating things that let the
inner product $\omega$ vary and give amusing matrix identities when
written in an orthonormal~basis.%

In this note we generalize the above formulas to arbitrary $(p,q)$--forms
$u$ and $v$. In Theorem~\ref{thm:norm} we show that there exist integers
$b_l$, independent of the vector space $V$, such that
$$
    (-1)^{k(k+1)/2}
    \langle u, v \rangle \, \omega\^n
    = \sum_{l=0}^n (-1)^{l} b_l \, 
    \Lambda\^l u \wedge \Lambda\^l \overline{\I v}
    \wedge \omega\^{n-k+2l}
$$
for any $k$--forms $u$ and $v$. The integers $b_l$ have actually been
studied in quite a different context \cite{Carlitz,Riordan,OEIS}; they are
the coefficients in the series expansion of the reciproqual of a Bessel
function of the first kind. 

The proof of this result is mostly by formal calculations with the
Lefschetz operator and its adjoint on the exterior algebra of $V$, the
only slight difficulty is showing that the coefficients $b_l$ are the
indicated ones. We do this in Section~\ref{sec:on}. There I go into
considerable detail in all calculations, which I hope the reader will
forgive me, but I decided it was best to make all the steps explicit for
the sake of error-checking.  In Section~\ref{sec:tw} we apply our results to
calculate the norm of a curvature form of a Hermitian vector bundle and
reprove the Kobayashi--L\"{u}bke inequality for Hermite--Einstein vector
bundles. The resulting proof is very likely the same as the usual
differential-geometric one if we write all the calculations in local
coordinates.

\begin{rema}
We state and prove all our results on a
finite-dimensional vector space $V$ equipped with a Hermitian inner
product. However, since our calulations are formal the same proofs work 
verbatim on a vector space $V$ equipped with a representation of
$\mathfrak{sl}(2)$, such as the cohomology algebra of a compact
K\"ahler manifold or a projective variety over a field $k$.
\end{rema}

\section{Inner products of exterior forms}
\label{sec:on}

Let $V$ be a complex vector space of dimension $n$ and $\omega$ a Hermitian
inner product on $V$. The Hodge star operator of $\omega$ is $*$,
the Lefschetz operator is $L$ and its adjoint is $\Lambda$. We write
$\I = \sum_{p,q} i^{p-q} \pi_{p,q}$, where $\pi_{p,q} : \bw{*} V^* \to
\bw{p,q} V^*$ is the orthogonal projection.

Recall that a $k$--form $u$ on $V$ is \emph{primitive} if $\Lambda u = 0$.
This is equivalent to $L^{n-k-1}u = 0$. Any $k$--form $u$ on $V$ admits a
\emph{primitive decomposition} $u = \sum L^{k-j} u_j$, which is an
orthogonal decomposition of $u$ where each form $u_j$ is a primitive
$(k-2j)$--form.

If $A$ is an element of an algebra then we define $A\^k = A^k / k!$ for $k
\geq 0$. This entails that
$$
A\^j \cdot A\^k = \tbinom{j+k}{j} A\^{j+k}.
$$
We will use this convention for the element $\omega$ of the exterior
algebra $\bw{*} V^*$ and the operators $L$ and $\Lambda$ on that
algebra.

Consider the sequence of integers defined recursively by $b_0 = 1$ and
\begin{equation}
\label{bezel}
\sum_{l=0}^{p} (-1)^{l} \tbinom{p}{l}^2 b_l = 0
\end{equation}
for $p \geq 1$. This is sequence number A000275 in the \textsl{On-line
encyclopedia of integer sequences}~\cite{OEIS}; see also \cite{Carlitz}
and \cite{Riordan} for not unrelated information about the sequence.
Its first few values are: 
$$
1 \quad 1 \quad 3 \quad 19 \quad 211 \quad 3651 \quad
90.921 \quad 3.081.513 \quad 136.407.699 \quad 7.642.177.651
$$
Our objective in this section is to prove the following theorem.

\begin{theo}
    \label{thm:norm}
Let $u$ be a $k$--form on an $n$--dimensional complex vector
space $V$. Then
$$
    (-1)^{k(k+1)/2}
    \lvert u \rvert^2 \omega\^n
    = \sum_{l=0}^n (-1)^{l} b_l \, 
    \Lambda\^l u \wedge \Lambda\^l \overline{\I u}
    \wedge \omega\^{n-k+2l}.
$$
\end{theo}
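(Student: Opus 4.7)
The plan is to expand both sides via the primitive decomposition of $u$ and then extract the recursion \eqref{bezel} from the combinatorial identity that emerges. Write $u = \sum_{j \geq 0} L\^j u_j$ with each $u_j$ primitive of degree $m_j := k - 2j$. The base case of a primitive $u$ (so $u = u_0$) is the easy half: $\Lambda u = 0$, so only the $l = 0$ term survives on the right, and the identity reduces to Weil's formula $*\bar u = (-1)^{k(k+1)/2} \overline{\I u} \wedge \omega\^{n-k}$ paired with $b_0 = 1$.

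For the general case, I would apply the standard $\mathfrak{sl}(2)$--commutation $\Lambda\^l L\^j w = \binom{n-m-j+l}{l} L\^{j-l} w$ valid for primitive $w$ of degree $m$, together with $\omega\^a \wedge \omega\^b = \binom{a+b}{a} \omega\^{a+b}$, to rewrite both sides as linear combinations of the forms $u_j \wedge \overline{\I u_{j'}} \wedge \omega\^{n-k+j+j'}$. By orthogonality of the Lefschetz decomposition combined with the primitive case applied to each $u_j$, the left-hand side collapses to the diagonal sum $\sum_j (-1)^j \binom{n-m_j}{j} u_j \wedge \overline{\I u_j} \wedge \omega\^{n-m_j}$, while the right-hand side produces both diagonal and off-diagonal contributions indexed by pairs $(j, j')$.

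The main obstacle is the combinatorial identification. Matching the two expressions forces, on the one hand, the off-diagonal coefficients ($j \neq j'$) to vanish and, on the other hand, the diagonal coefficients to equal $(-1)^j \binom{n-m_j}{j}$; both requirements must follow from the single recursion \eqref{bezel}. Concretely the off-diagonal vanishing produces constraints of the shape $\sum_l (-1)^l b_l \binom{n-k+j+l}{l} \binom{n-k+j'+l}{l} \binom{j+j'-2l}{j-l} \binom{n-k+j+j'}{j+j'-2l} = 0$, and the delicate step is to recognise these, after a suitable normalisation, as the defining relations of the sequence A000275; the diagonal identity then follows from the same relations via a Vandermonde-type collapse. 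Isolating the correct normalisation that reveals $b_l$ is, as announced in the introduction, the only real difficulty in the otherwise formal calculation.
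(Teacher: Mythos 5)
Your overall skeleton (primitive decomposition, the $\mathfrak{sl}(2)$ commutation relation $\Lambda\^l L\^j w = \binom{n-m-j+l}{l}L\^{j-l}w$, reduction of both sides to combinations of $u_j\wedge\overline{\I u_{j'}}\wedge\omega\^{\bullet}$, and the base case via Weil's formula) matches the paper's strategy, and your diagonal expression for the left-hand side is correct. But there is a conceptual error at the point where you locate the recursion \eqref{bezel}: the off-diagonal terms $u_j\wedge\overline{\I u_{j'}}\wedge\omega\^{n-k+j+j'}$ with $j\neq j'$ vanish \emph{identically}, for free, with no condition on the $b_l$. Indeed if $j>j'$ then $n-k+j+j'\geq n-(k-2j')+1$, and a primitive form $v$ of degree $m$ satisfies $v\wedge\omega^{n-m+1}=0$; this is exactly the mechanism of the paper's Proposition~\ref{16}. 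So the ``constraints'' you extract from off-diagonal vanishing are vacuous and cannot be the source of the defining relations of A000275. All of the content sits in the diagonal equations, one for each $j$, and these all reduce (by the substitution $n\mapsto n-2j$, $p\mapsto p-j$, $q\mapsto q-j$) to the single equation
$$
\tbinom{n-q+p}{p}=\sum_{l=0}^{p}(-1)^{l}b_l\,\tbinom{p}{l}\tbinom{n-q+l}{p-l}\tbinom{n-q+l}{l},
$$
which you would still have to prove; the appeal to ``a Vandermonde-type collapse'' does not do this, and this identity is precisely the ``only real difficulty'' you defer.

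The paper closes this gap with a device you are missing: it does not try to verify the diagonal identity directly from \eqref{bezel}. Instead it observes that the triangular system produced by Proposition~\ref{prop:morphism} \emph{defines} coefficients $b_l(p,n)$ a priori depending on $n$, and then argues by induction on $p$ that they equal $b_l$. The induction step compares the coefficient of $\lvert L\^p u_0\rvert^2$ on both sides; since the resulting identity holds for every $n\geq p+q$ and every term is a polynomial in $n$ of degree $\leq p$ while the left side is the constant $1$, the coefficient of $n^p$ on the right must vanish. Extracting that leading coefficient (each $\binom{n-q+l}{p-l}\binom{n-q+l}{l}$ contributes $1/((p-l)!\,l!)$) turns the equation into exactly $\sum_{l=0}^{p}(-1)^{l}\binom{p}{l}^2 b_l=0$, simultaneously showing $b_p(p,n)$ is independent of $n$. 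Without this ``compare top-degree coefficients in $n$'' step, or a genuine direct proof of the diagonal binomial identity, your argument does not close.
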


We will actually prove this result for a $(p,q)$--form $u$ with $p \leq q$
and $p+q = k$. By conjugation the restriction $p \leq q$ is irrelevant and
it is just a matter of basic combinatorics and degree reasoning to see
that if the result holds for $(p,q)$--forms with $p + q = k$ then it also
holds for $k$--forms, i.e., that the right-hand side respects the
orthogonal decomposition of $\bw{k}V^*$.  Proving this result demands a
certain amount of preparation, all of which rests on the following formula.

\begin{prop}[{{\cite[Proposition~1.67]{Huy}}}]
Let $u$ be a primitive $(p,q)$--form on $V$. Set $k = p+q$. Then
$$
    * L\^{j} u = (-1)^{k(k+1)/2} L\^{n-j-k} \I u.
$$
\end{prop}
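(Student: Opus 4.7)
The plan is to reduce the identity to the base case $j = 0$, which is the classical Weil formula $* u = (-1)^{k(k+1)/2} L\^{n-k} \I u$ for a primitive $(p,q)$-form $u$ of total degree $k = p+q$, and then to propagate it to general $j$ using the $\mathfrak{sl}(2)$ action on $\bw{*} V^*$.

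I would first establish the base case by choosing an $h$-orthonormal basis of $V$, verifying the identity on monomial $(p,0)$-forms (where $\I$ acts as the scalar $i^p$), and extending by $\CC$-linearity and conjugation to arbitrary primitive $(p,q)$-forms. For the inductive step I would use the commutation identity $* L = \Lambda *$ on $\bw{*} V^*$, which one reads off from the adjoint relation $\la L u, v \ra = \la u, \Lambda v \ra$ combined with the defining formula $\la u, v \ra \, \omega\^n = u \wedge * \overline v$ and the evaluation $*^2 = \pm \id$ on each graded piece. Iterating yields $* L\^j = \Lambda\^j *$, so plugging in the base case gives
$$
* L\^j u = (-1)^{k(k+1)/2} \Lambda\^j L\^{n-k} \I u.
$$
Since $L$, $\Lambda$, and $\I$ have bidegrees $(1,1)$, $(-1,-1)$, and $(0,0)$ respectively, the operator $\I$ commutes with both $L$ and $\Lambda$; in particular $\I u$ is again primitive of degree $k$.

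It then remains to evaluate $\Lambda\^j L\^{n-k} \I u$, which is pure $\mathfrak{sl}(2)$-bookkeeping: for a primitive $k$-form $v$ and $0 \leq j \leq m \leq n - k$ one has
$$
\Lambda\^j L\^m v = \tbinom{n - k - m + j}{j} L\^{m-j} v,
$$
proved by a short induction on $j$ starting from $\Lambda L v = (n-k)\, v$. Specialized to $m = n-k$ and $v = \I u$, the binomial coefficient collapses to $\tbinom{j}{j} = 1$ and the right-hand side becomes exactly $L\^{n-j-k} \I u$, finishing the proof. The main obstacle is really the base case: once $j = 0$ is settled the rest of the argument is purely formal, but pinning down the signs and factorials in the $(p,0)$ calculation and fixing the precise form of the commutation between $*$ and $L$ on each graded piece requires some care.
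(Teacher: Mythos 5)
The paper offers no proof of this proposition---it is imported verbatim from Huybrechts---so there is nothing internal to compare against; I am judging your argument on its own. Your propagation step is correct: from $\la Lu,v\ra=\la u,\Lambda v\ra$ and $\la u,v\ra\,\omega\^n=u\wedge *\overline v$ one gets $L*=*\Lambda$, and combining with $*^2=(-1)^k$ on $k$--forms gives $*L=\Lambda*$, hence $*L\^j=\Lambda\^j*$; the bookkeeping identity $\Lambda\^j L\^m v=\tbinom{n-k-m+j}{j}L\^{m-j}v$ for a primitive $k$--form $v$ is also right, and at $m=n-k$ it does collapse to $L\^{n-k-j}\I u$. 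So granting the case $j=0$, the general case follows exactly as you say.

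The genuine gap is the base case, which is where essentially all the content of the proposition lives. You propose to verify $*u=(-1)^{k(k+1)/2}L\^{n-k}\I u$ on monomial $(p,0)$--forms and then ``extend by $\CC$--linearity and conjugation to arbitrary primitive $(p,q)$--forms.'' That extension is not available: linear combinations of $(k,0)$--forms and their conjugates span only $\bw{k,0}V^*\oplus\bw{0,k}V^*$, while primitive forms of genuinely mixed type---for instance the primitive $(1,1)$--forms $dz_1\wedge d\bar z_2$ or $dz_1\wedge d\bar z_1-dz_2\wedge d\bar z_2$---lie outside that span, and these are precisely the cases where the sign and the factorials are nontrivial to pin down. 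Some real argument must be supplied here, e.g.\ the explicit computation on monomials $dz_A\wedge d\bar z_B\wedge\prod_{k\in C}(dz_k\wedge d\bar z_k)$ adapted to the primitive decomposition, or a reduction to $n=1$ using compatibility of $*$ with orthogonal direct sums, as in Huybrechts' or Voisin's proofs. (Note also that the classical Weil identity is usually stated for all $j$ at once, so if one is going to cite the base case one may as well cite the whole statement, which is what the paper does.)
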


\begin{exam}
If $u$ is a primitive $(p,q)$--form, then this formula gives
$$
\lvert L\^{j} u \rvert^2 \omega\^{n}
= \tbinom{n-k}{j} \lvert u \rvert^2 \omega\^{n}
$$
after some manipulations. Working out the details of this is a fine way to
appreciate how error-prone these calculations become.
\end{exam}

If $u$ is a $(p,q)$--form with $p \leq q$, then we write $u =
\sum_{j=0}^p L\^{p-j} u_j$ for its primitive decomposition, where each
$u_j$ is a primitive $(j,j+q-p)$--form. This decomposition is
orthogonal, so
$$
\lvert u \rvert^2
= \sum_{j=0}^p \lvert L\^{p-j} u_j \rvert^2
= \sum_{j=0}^p \tbinom{n-2j-q+p}{p-j} \lvert u_j \rvert^2.
$$

\begin{prop}
    Let $u = \sum_{j=0}^p u_j \wedge \omega\^{p-j}$ be the primitive
decomposition of a $(p,q)$--form $u$, where $p \leq q$ and each $u_j$ is a
primitive $(j,j+q-p)$--form. Then
\begin{equation*}
  \Lambda\^l u 
  = \sum_{j=0}^{p-l} \tbinom{n-j-q+l}{l} L\^{p-j-l} u_j, 
\end{equation*}
and the decomposition of $\Lambda\^l u$ is primitive.
\end{prop}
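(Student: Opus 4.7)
The plan is to reduce, by linearity, to computing $\Lambda^{[l]}\bigl(L^{[p-j]} u_j\bigr)$ for each primitive summand separately. The main ingredient is the classical identity
$$
\Lambda L^j v = j(n-k-j+1)\, L^{j-1} v
$$
valid for any primitive form $v$ of pure degree $k$. I would derive this in one line by writing $[\Lambda, L^j] = \sum_{s=0}^{j-1} L^{j-1-s} H L^s$, where $H = [\Lambda,L]$ acts on degree-$m$ forms as multiplication by $n-m$, evaluating the telescoping sum at the primitive form $v$, and summing the arithmetic progression that appears.

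Iterating the identity $l$ times then gives
$$
\Lambda^l L^j v = \frac{j!}{(j-l)!} \cdot \frac{(n-k-j+l)!}{(n-k-j)!}\, L^{j-l} v,
$$
and dividing both sides by $l!\, j!$ to pass to the normalized operators $\Lambda^{[l]}$, $L^{[j]}$, $L^{[j-l]}$ collapses the combinatorial prefactor into a single binomial coefficient:
$$
\Lambda^{[l]} L^{[j]} v = \binom{n-k-j+l}{l}\, L^{[j-l]} v.
$$

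Next I would specialize to $v = u_j$, which is primitive of pure degree $k_j = 2j + q - p$, and to $L$-exponent $p - j$. The simplification $n - k_j - (p-j) + l = n - j - q + l$ produces exactly the coefficient $\binom{n-j-q+l}{l}$ asserted in the statement, and the sum truncates at $j = p-l$ because $L^{[p-j-l]}$ vanishes whenever $p - j - l < 0$. For the primitivity claim, it suffices to observe that $\Lambda^{[l]} u$ is a $(p-l, q-l)$-form whose primitive decomposition necessarily has the shape $\sum_{j=0}^{p-l} L^{[p-l-j]} v_j$ with $v_j$ primitive of type $\bigl(j,\, j + (q-l) - (p-l)\bigr) = (j, j + q - p)$. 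The formula we derived already has exactly this shape with $v_j$ a scalar multiple of $u_j$, so uniqueness of the primitive decomposition concludes.

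The only real hazard in this argument is the bookkeeping when passing from $\Lambda^l L^j v$ to $\Lambda^{[l]} L^{[j]} v$: matching all the factorials so that the prefactor recognizes itself as the binomial coefficient $\binom{n-k-j+l}{l}$ requires a moment of attention, but there is no conceptual obstacle once the $\mathfrak{sl}(2)$-commutation identity is in hand.
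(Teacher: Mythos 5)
Your proof is correct, but it takes a genuinely different route from the paper's. The paper proves the identity by duality: it reduces to $u = L^{[p-j]}u_j$ with $u_j$ primitive, pairs $\Lambda^{[l]}(L^{[p-j]}u_j)$ against an arbitrary test form $v$ of complementary bidegree, moves $L^{[l]}$ across the inner product by adjointness, and evaluates both sides of the resulting wedge products using the explicit formula $*L^{[j]}u = (-1)^{k(k+1)/2}L^{[n-j-k]}\mathbf{I}u$ for primitive $u$ (the quoted Proposition 1.67 of Huybrechts); non-degeneracy of the pairing then forces the claimed identity. You instead compute $\Lambda^{[l]}L^{[j]}$ directly on a primitive form via the $\mathfrak{sl}(2)$-relation $[\Lambda,L]=H$, with $H$ acting as $n-m$ in degree $m$; the telescoping sum $\sum_{s=0}^{j-1}(n-k-2s) = j(n-k-j+1)$ and the iteration give exactly $\Lambda^{[l]}L^{[j]}v = \binom{n-k-j+l}{l}L^{[j-l]}v$, and the substitution $k = 2j+q-p$, $j \mapsto p-j$ lands on the stated coefficient. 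Both arguments are sound; yours has the advantage of being purely representation-theoretic --- it never touches the Hodge star or the Weil operator $\mathbf{I}$, so it literally verifies the paper's introductory remark that everything should go through for an arbitrary $\mathfrak{sl}(2)$-representation --- while the paper's version economizes by reusing the $*$-formula it already needs elsewhere in the section. Your handling of the truncation at $j=p-l$ and of the primitivity claim (uniqueness of the primitive decomposition in bidegree $(p-l,q-l)$) is correct as stated.
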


\begin{proof}
  By linearity it is enough to prove this for a $(p,q)$--form
$u = L\^{p-j} u_j$, where $u_j$ is a primitive $(j,j+q-p)$--form. Let
$v$ be a form of degree $(n-p+l,n-q+l)$ and set $k = 2j+q-p$. Then
  \begin{align*}
    \la v, \Lambda\^l (L\^{p-j} u_j ) \ra \omega\^ n
    &= \la L\^l v, L\^{p-j} u_j \ra L\^ n \\
    &= L\^l v \wedge i^{q-p}(-1)^{\binom{k+1}{2}} L\^{n-j-q} \overline u_j  \\
    &= v \wedge i^{q-p}(-1)^{\binom{k+1}{2}}
    \tbinom{n-j-q+l}{l} L\^{n-j-q+l} \overline u_j  \\
    &= v \wedge * \bigl( \tbinom{n-j-q+l}{l} L\^{p-j-l} \overline u_j \bigr) \\
    &= \la v, \tbinom{n-j-q+l}{l} L\^{p-j-l} u_j \ra \omega\^ n.
  \end{align*}
Since the equality holds for all $v$, the result is proved for forms
of type $L\^{p-j} u_j$, where $u_j$ is primitive. Note that the
decomposition of $\Lambda\^l (L\^{p-j} u_j)$ is again primitive, so
the same will hold for an arbitrary form $u$.
\end{proof}

\begin{prop}
  \label{16}
  Let $u = \sum_{j=0}^p u_j \wedge \omega^{p-j}$ be the primitive
decomposition of a $(p,q)$--form $u$, where $p \leq q$ and $u_j$ is a
primitive $(j,j+q-p)$--form. Then
$$
u \wedge \overline{\I u} \wedge \omega^{n-p-q}
= \sum_{j=0}^p u_j \wedge \overline{\I u_j} \wedge \omega^{n-2j-q+p}.
$$
\end{prop}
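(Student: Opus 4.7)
The plan is to expand the left-hand side bilinearly using the given primitive decomposition, collect $\omega$-factors, and observe that the off-diagonal contributions vanish while the diagonal ones recover exactly the right-hand side. Expanding the product yields
$$
u \wedge \overline{\I u} \wedge \omega^{n-p-q}
= \sum_{i,j=0}^{p} u_i \wedge \overline{\I u_j} \wedge \omega^{n+p-i-j-q},
$$
and the contributions with $i = j$ match the sum on the right term by term. So the entire content of the proposition is the vanishing of the cross terms $u_i \wedge \overline{\I u_j} \wedge \omega^{n+p-i-j-q}$ whenever $i \neq j$.

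To establish that vanishing, I would identify each cross term with a scalar multiple of an inner product between two summands of the primitive decomposition of $u$, and then invoke the orthogonality of that decomposition. First I would note that $\overline{u_j}$ is a primitive form of bidegree $(j+q-p,j)$ and total degree $k_j = 2j+q-p$, and that on such a form $\overline{\I u_j}$ coincides with $\I\overline{u_j}$: both equal $i^{q-p}\overline{u_j}$. Huybrechts' formula applied to $\overline{u_j}$ then rewrites
$$
\ast\, L\^{p-j}\overline{u_j}
= (-1)^{k_j(k_j+1)/2}\, L\^{n-j-q}\, \overline{\I u_j}.
$$
Combining this with the normalization identities $\omega^a = a!\,\omega\^a$ and $L\^a L\^b = \binom{a+b}{a} L\^{a+b}$ to line up the exponents of $\omega$, each cross term is expressed as a non-zero constant times
$$
L^{p-i} u_i \wedge \ast\, \overline{L^{p-j} u_j}
= \la L^{p-i} u_i, L^{p-j} u_j\ra\, \omega\^n,
$$
which is zero for $i \neq j$ since the primitive decomposition $u = \sum L\^{p-j} u_j'$ (with $u_j' = (p-j)!\,u_j$) is orthogonal.

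The main obstacle is purely bookkeeping: juggling the two normalizations $\omega^a$ versus $\omega\^a$, carrying the sign $(-1)^{k_j(k_j+1)/2}$ out of the Hodge star formula, and verifying that $\I$ commutes with complex conjugation on primitive forms of the relevant bidegree. Once these constants are tracked, the vanishing of cross terms and the matching of diagonal terms are both immediate, and the proof uses no tool beyond Proposition~1.67 and the orthogonality of the primitive decomposition.
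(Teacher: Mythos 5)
Your proof is correct, but it takes a genuinely different route from the paper. The paper argues by induction on $p$: it writes a $(p+1,q+1)$--form as $\omega \wedge v_p + u_{p+1}$ with $u_{p+1}$ primitive, expands, and kills the two cross terms using only the elementary fact that a primitive $k$--form $w$ satisfies $w \wedge \omega^{n-k+1} = 0$; no Hodge star or inner product enters. You instead expand bilinearly over the whole primitive decomposition and dispose of all cross terms at once by running the duality $\la a,b\ra\,\omega\^n = a \wedge *\overline b$ backwards: via Proposition~1.67 applied to $\overline{u_j}$ (together with the check that $\overline{\I u_j} = \I\overline{u_j} = i^{q-p}\overline{u_j}$ on these bidegrees), each term $u_i \wedge \overline{\I u_j} \wedge \omega^{n+p-i-j-q}$ is a constant multiple of $\la L\^{p-i}u_i, L\^{p-j}u_j\ra\,\omega\^n$, which vanishes for $i \neq j$ by orthogonality of the Lefschetz decomposition. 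I checked the bookkeeping: the exponents $(p-i)+(n-j-q) = n+p-i-j-q$ line up, the factor $(p-i)!\,(n-j-q)!$ is harmless, and $n-j-q \geq 0$ whenever the relevant forms are nonzero. Your version is non-inductive, reuses the same duality trick as the proposition immediately preceding this one, and makes transparent that the identity is just the orthogonality of the primitive decomposition in disguise; the paper's version is more elementary and self-contained, needing nothing beyond the characterization $L^{n-k+1}u = 0$ of primitivity, at the cost of an induction and of hiding where the orthogonality is used.
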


\begin{proof}
  We induct on $p$, the result being clear for $p = 0$ or $q = 0$. If
$u$ is a $(p+1,q+1)$--form we have $u = \omega \wedge v_p + u_{p+1}$,
where $v_p$ is a $(p,q)$--form whose primitive decomposition is
evident. Then
$$
u \wedge \overline{\I u} = \omega^2 \wedge v_p \wedge \overline{\I v_p}
+ \omega \wedge v_p \wedge \overline{\I u_{p+1}}
+ \omega \wedge u_{p+1} \wedge \overline{\I v_p}
+ u_{p+1} \wedge \overline{\I u_{p+1}},
$$
so
$$
\displaylines{
u \wedge \overline{\I u} \wedge \omega^{n-p-q-2}
= v_p \wedge \overline{\I v_p} \wedge \omega^{n-p-q}
+ v_p \wedge \overline{\I u_{p+1}} \wedge \omega^{n-p-q-1}
\hfill\cr\hfill
{}+ u_{p+1} \wedge \overline{\I v_p} \wedge \omega^{n-p-q-1}
+ u_{p+1} \wedge \overline{\I u_{p+1}} \wedge \omega^{n-p-q-2}.
}
$$
The two middle terms are zero because $u_{p+1}$ is primitive, so $u_{p+1}
\wedge \omega^{n-p-q-1} = 0$ and $v_p \wedge \overline{\I v_p} \wedge
\omega^{n-p-q}$ is of the announced form by induction.
\end{proof}

\begin{rema}
  Here the reader may wonder what happens for a $(p,q)$--form with
$p+q > n$. The Lefschetz theorems tell all: That $L^k : \bw{n-k} V^* \to
\bw{n+k} V^*$ is an isomorphism entails that there are no primitive
$(p,q)$--forms with $p + q > n$, so no information is lost here by the wedge
product of such forms.
\end{rema}

\begin{prop}
    \label{prop:morphism}
Let $u = \sum_{j=0}^p u_j \wedge \omega\^{p-j}$ be the primitive
decomposition of a $(p,q)$--form $u$, where $p \leq q$ and each $u_j$ is a
primitive $(j,j+q-p)$--form. Then
$$
\displaylines{
    \Lambda\^l u \wedge \Lambda\^l \overline{\I u}
    \wedge \omega\^{n-p-q+2l}
    \hfill\cr\hfill{}
    = \sum_{j=0}^{p-l} 
    (-1)^j
    (-1)^{(q-p)(q-p+1)/2}
    \tbinom{p-j}{l}
    \tbinom{n-j-q+l}{p-l-j}
    \tbinom{n-j-q+l}{l}
    \lvert L\^{p-j} u_j \rvert^2
    \wedge \omega\^ n.
}
$$
\end{prop}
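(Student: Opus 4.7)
The plan is to start from the explicit formula for $\Lambda\^l u$ given by the previous proposition,
$$
\Lambda\^l u = \sum_{j=0}^{p-l} \tbinom{n-j-q+l}{l} L\^{p-j-l} u_j,
$$
and note that this is itself the primitive decomposition of the $(p-l,q-l)$-form $\Lambda\^l u$ (whose primitive components have type $(j,j+q-p)$, the same type as the $u_j$). The operators $L$ and $\Lambda$ preserve $p-q$, so they commute with $\I$; combined with the fact that $\Lambda$ commutes with complex conjugation, this yields $\Lambda\^l \overline{\I u} = \overline{\I \Lambda\^l u}$. Hence the left-hand side is of the form $w \wedge \overline{\I w} \wedge \omega\^{n-(p-l)-(q-l)}$ for $w = \Lambda\^l u$.

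Next I would apply Proposition~\ref{16} to $\Lambda\^l u$. This kills the cross terms in the double sum $\Lambda\^l u \wedge \overline{\I \Lambda\^l u}$ and leaves only diagonal contributions indexed by the primitive pieces $u_j$, for $0 \le j \le p-l$. Setting $k_j = 2j+q-p$, each diagonal term can be rewritten using the formula $* L\^{0} u_j = (-1)^{k_j(k_j+1)/2} L\^{n-k_j} \I u_j$ from the Huybrechts proposition, which gives
$$
u_j \wedge \overline{\I u_j} \wedge \omega\^{n-k_j}
= (-1)^{k_j(k_j+1)/2} \lvert u_j \rvert^2 \, \omega\^n.
$$
One then uses the orthogonality of the primitive decomposition together with the identity $\lvert L\^{p-j} u_j \rvert^2 = \binom{n-k_j}{p-j} \lvert u_j \rvert^2$ to pass from $\lvert u_j \rvert^2$ to $\lvert L\^{p-j} u_j \rvert^2$ in each summand.

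For the sign, writing $m = q-p$ and $k_j = 2j+m$ one computes
$$
\tfrac{k_j(k_j+1)}{2} = 2j^2 + 2jm + j + \tfrac{m(m+1)}{2} \equiv j + \tfrac{m(m+1)}{2} \pmod 2,
$$
so $(-1)^{k_j(k_j+1)/2} = (-1)^j (-1)^{(q-p)(q-p+1)/2}$, exactly the sign in the statement. It remains to match the combinatorial factor: after accounting for $\omega\^{n-p-q+2l} = \omega^{n-p-q+2l}/(n-p-q+2l)!$ and the factorials produced by rewriting $L\^{p-j-l}$ in terms of $\omega^{p-j-l}$, the coefficient on $\lvert L\^{p-j} u_j \rvert^2 \omega\^n$ simplifies to
$$
\frac{\binom{n-j-q+l}{l}^2 (p-j)!(n-j-q)!}{((p-l-j)!)^2 (n-p-q+2l)!},
$$
which one checks by direct expansion of factorials to equal $\binom{p-j}{l}\binom{n-j-q+l}{p-l-j}\binom{n-j-q+l}{l}$.

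The only real obstacle is the bookkeeping in this last identity: one has to keep track of the three conventions in play (divided powers of $L$, ordinary powers of $\omega$, and divided powers of $\omega$) without losing factors of $(n-p-q+2l)!$ or $(p-l-j)!$. Everything else is a direct application of the three preceding propositions.
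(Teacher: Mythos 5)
Your proposal is correct and follows essentially the same route as the paper: identify the primitive decomposition of the $(p-l,q-l)$--form $\Lambda\^l u$ via the preceding proposition, apply Proposition~\ref{16} to it, convert each diagonal term to a norm with the $*L\^j$ formula (which produces exactly the sign $(-1)^j(-1)^{(q-p)(q-p+1)/2}$), and then match binomial coefficients after passing from $\lvert u_j\rvert^2$ to $\lvert L\^{p-j}u_j\rvert^2$. Your intermediate coefficient $\binom{n-j-q+l}{l}^2(p-j)!(n-j-q)!/\bigl((p-l-j)!^2(n-p-q+2l)!\bigr)$ does indeed equal the stated product of binomials, so the bookkeeping checks out.
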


\begin{proof}
We first apply Proposition~\ref{16} to our $(p,q)$--form
$u = \sum_j L\^{p-j} u_j = \sum_j (u_j/(p-j)!) \wedge \omega^{p-j}$.
That gives
\begin{align*}
u \wedge \overline{\I u}
\wedge \omega\^{n-p-q} 
&= \sum_{j=0}^p \frac{1}{(n-p-q)!}
\Bigl( \frac{u_j\wedge \overline{\I u_j}}{(p-j)!^2} \Bigr) 
\wedge \omega^{n-2j-q+p}\\
&= \sum_{j=0}^p \frac{1}{(n-p-q)!}\frac{(n-2j-q+p)!}{(p-j)!^2} 
u_j \wedge \overline{\I u_j} \wedge \omega\^{n-2j-q+p}\\
&= \sum_{j=0}^p \tbinom{n-j-q}{p-j} \tbinom{n-2j-q+p}{p-j}
u_j \wedge \overline{\I u_j} \wedge \omega\^{n-2j-q+p}
\\
&= \sum_{j=0}^p 
(-1)^{\binom{2j+q-p+1}{2}} 
\tbinom{n-j-q}{p-j} \tbinom{n-2j-q+p}{p-j}
\lvert u_j \rvert^2 \omega\^{n}
\\
&= \sum_{j=0}^p (-1)^j(-1)^{(q-p)(q-p+1)/2}
\tbinom{n-j-q}{p-j} \tbinom{n-2j-q+p}{p-j}
\lvert u_j \rvert^2 \omega\^{n}.
\end{align*}
To get the general result, we apply this to the $(p-l,q-l)$--form
$\Lambda\^l u$. That gives (by letting $p \mapsto p - l$, $q \mapsto q
- l$)
$$
\displaylines{
  \Lambda\^l u \wedge \Lambda\^l \overline{\I u}
  \wedge \omega\^{n-p-q+l} 
  \hfill\cr\hfill
  \jot=0pt
  \eqalign{
  &= \sum_{j=0}^{p-l} 
  (-1)^{\binom{2j+q-p+1}{2}} 
  \tbinom{n-2j-q+p}{p-l-j}
  \tbinom{n-j-q+l}{p-l-j}
  \tbinom{n-j-q+l}{l}^2
  \lvert u_j \rvert^2
  \wedge \omega\^ n
  \cr
  &= \sum_{j=0}^{p-l} 
  (-1)^{\binom{2j+q-p+1}{2}} 
  \frac{
  \tbinom{n-2j-q+p}{p-l-j}
  \tbinom{n-j-q+l}{p-l-j}
  \tbinom{n-j-q+l}{l}^2
  }{\tbinom{n-2j-q+p}{p-j}}
  \lvert L\^{p-j} u_j \rvert^2
  \wedge \omega\^ n.
  }\!
}
$$
Once we remark that
$$
\frac{\binom{n-2j-q+p}{p-j-l} \binom{n-j-q+l}{l}}{\binom{n-2j-q+p}{p-j}}
= \tbinom{p-j}{l}
$$
the proof is finished.
\end{proof}

This last result is the key to proving what we want. It tells us how to
write the square of the norm of a $(p,q)$--form $u$ with $p \leq q$ as
a linear combination of traces of the form: Define two vector spaces
\begin{align*}
X &= \operatorname{Span}(
\Lambda\^l u \wedge \Lambda\^l \overline{\I u} 
\wedge \omega\^{n-p-q+2l} 
\mid l = 0,\ldots,p),
\\
Y &= \operatorname{Span}(|L\^{p-j}u_j|^2 \omega\^{n} \mid j=0,\ldots,p).
\end{align*}
(By a Zariski-open argument it is enough to prove our result on the
open set of forms $u$ where all the above symbols are nonzero.)
Proposition~\ref{prop:morphism} defines a linear morphism $A : X \to
Y$; a morphism that only depends on the dimension of $V$ and the
degree $p$, but is otherwise independent of the form $u$. Since
$|u|^2\omega\^n = \sum |L\^{p-j}u_j| \omega\^n$, the coefficients of
the linear combination we seek are the coordinates of the vector
$A^{-1}(1,\ldots,1)$. This observation shows that coefficients like
the ones we seek exist, the task is now to show that they coincide
with our integer sequence.

\begin{rema}
I'll say a little about how we originally found the main result of this
paper in case the reader is curious. First, we guessed that some kind of
linear combination like the one in Theorem~\ref{thm:norm} existed, but
assumed that its coefficients at least depended on the dimension of the
underlying vector space. Then we calculated our way to
Proposition~\ref{prop:morphism}. Once there, we calculated
$A^{-1}(1,\ldots,1)$ for $(1,1)$, $(2,2)$, $(3,3)$, $(4,4)$ and
$(5,5)$--forms with the help of computer algebra software, from which we
guessed that the coefficients were in fact independent of the vector space.
Searching the OEIS then revealed the coefficients probably formed a known
sequence, and from there it was not difficult to prove the main result.  
\end{rema}

\begin{proof}[Proof of Theorem~\ref{thm:norm}]
    If $u$ is a $(p,q)$--form with $p \leq q$ then we set $k = p+q$
and write
$$
\lvert u \rvert^2 \omega\^n
= \sum_{l=0}^n (-1)^{l + k(k+1)/2} b_l(p,n) \, 
\Lambda\^l u \wedge \Lambda\^l \overline{\I u}
\wedge \omega\^{n-p-q+2l},
$$
where $b_l(p,n)$ is the coefficient whose existence is guaranteed by
Proposition~\ref{prop:morphism}.  We will prove that $b_l(p,n) = b_l$ by
induction on $p$. 

We first remark that $b_0(p,n) = b_0 = 1$ for all $p, q, n$, because
the norm of a primitive $(p,q)$--form $u$ is $\lvert u \rvert^2 \omega\^n =
(-1)^{k(k+1)/2} u \wedge \overline{\I u} \wedge \omega\^{n-p-q}$.

For the induction step, we assume that $b_l(p,n) = b_l$ for $l = 0, \ldots,
p-1$ and want to prove that $b_{p}(p,n) = b_{p}$. For this, first
recall that if $u = \sum_j L\^{p-j} u_j$ is the primitive decomposition of
a $(p,q)$--form with $p \leq q$, then 
$$
\lvert u \rvert^2 \omega\^n 
= \sum_j \lvert L\^{p-j} u_j \rvert^2.
$$ 
Let's record for immediate use that if $p \leq q$ then
$(-1)^{(q-p)(q-p+1)/2} = (-1)^p(-1)^{(p+q)(p+q+1)/2}$. Then we can also write
the above as
$$
\def\bil{\mkern26mu}
\displaylines{
\lvert u \rvert^2 \omega\^n 
\hfill\cr\noalign{\vskip-3pt}\bil
{}
= \sum_{l=0}^n (-1)^{k(k+1)/2+l} b_l(p,n) \, 
\Lambda\^l u \wedge \Lambda\^l \overline{\I u}
\wedge \omega\^{n-2(p-l)}
\hfill\cr\bil
{}= \sum_{l=0}^n (-1)^{k(k+1)/2+l} b_l(p,n) 
\sum_{j=0}^l 
(-1)^{j}
(-1)^{(q-p)(q-p+1)/2}
\hfill\cr\hfill{}\times
  \tbinom{p-j}{l}
  \tbinom{n-j-q+l}{n-p-q+2l}
  \tbinom{n-j-q+l}{l}
  \lvert L\^{p-j} u_j \rvert^2
  \wedge \omega\^ n
  \phantom{.}
\cr\quad\hfill
{}= \sum_{l=0}^n (-1)^{l+p} b_l(p,n) \, 
\sum_{j=0}^l 
(-1)^j
  \tbinom{p-j}{l}
  \tbinom{n-j-q+l}{n-p-q+2l}
  \tbinom{n-j-q+l}{l}
  \lvert L\^{p-j} u_j \rvert^2
  \wedge \omega\^ n.
}
$$
By comparing the coefficients of $\lvert L\^p u_0 \rvert^2$ in these
two expressions we~find%
\begin{align*}
1 &= 
\sum_{l=0}^p (-1)^{l+p} b_l(p,n) 
\, 
\tbinom{p}{l}
\tbinom{n-q+l}{n-p-q+2l}
\tbinom{n-q+l}{l}
\\
&= 
\tbinom{n}{p} b_p(p,n)
+ \sum_{l=0}^{p-1} 
(-1)^{l+p}
b_l \, 
\tbinom{p}{l}
\tbinom{n-q+l}{p-l}
\tbinom{n-q+l}{l}
\end{align*}
for all $n \geq p+q$. The binomial coefficient $\binom{n}{k}$ is a
polynomial of degree $k$ in $n$ whose leading term is
$1/k!$. Comparing the top-degree coefficients of $n$ in the above
equation we find that
$$
0 =
\frac{1}{p!} b_p(p,n)
+ \sum_{l=0}^{p-1} (-1)^{l+p} b_l \, 
\tbinom{p}{l}
\frac{1}{l! (p-l)!}.
$$
Since this equation expresses $b_p(p,n)$ in terms of things that do not
depend on $n$, we conclude that $b_p$ doesn't depend on $n$ either.
The defining recurrance relation \eqref{bezel} for the integers $b_l$,
now shows that $b_p(p,n) = b_p$.

Finally, we remark that by conjugating the form $u$ it is enough
to prove our formula for forms $u$ with $p \leq q$.
\end{proof}

\begin{coro}
If $u$ and $v$ are complex $(p,q)$--forms on $V$, then
$$
(-1)^{k(k+1)/2}
\la u, v \ra \, \omega\^n
= \sum_{l=0}^{n} 
(-1)^{l} b_l \, 
\Lambda\^l u \wedge \Lambda\^l \overline{\I v} \wedge \omega\^{n-p-q+2l}.
$$
\end{coro}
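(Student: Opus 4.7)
The plan is to deduce this corollary from Theorem~\ref{thm:norm} by polarization, with essentially no further calculation. Introduce the two maps
\begin{align*}
B_1(u,v) &= (-1)^{k(k+1)/2} \la u, v\ra \, \omega\^n, \\
B_2(u,v) &= \sum_{l=0}^{n} (-1)^{l} b_l \, \Lambda\^l u \wedge \Lambda\^l \overline{\I v} \wedge \omega\^{n-p-q+2l}
\end{align*}
from $\bw{p,q}V^* \times \bw{p,q}V^*$ to the one-dimensional space $\bw{n,n}V^*$, so the task becomes showing $B_1 = B_2$.

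The first step is to check that $B_1$ and $B_2$ are both sesquilinear, meaning $\CC$--linear in $u$ and conjugate-linear in $v$. For $B_1$ this is immediate from the definition $\la u, v\ra\, \omega\^n = u \wedge * \overline{v}$ of the Hermitian inner product. For $B_2$, the operators $\Lambda$ and $\I$ are $\CC$--linear on $\bw{*}V^*$, so $u \mapsto \Lambda\^l u$ is $\CC$--linear and $v \mapsto \Lambda\^l \overline{\I v}$ is conjugate-linear, the only source of antilinearity being the conjugation applied to $v$; wedge product with the real form $\omega\^{n-p-q+2l}$ preserves these properties.

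The second step is to invoke Theorem~\ref{thm:norm} with $k = p+q$, which gives $B_1(u,u) = B_2(u,u)$ for every $(p,q)$--form $u$. The standard complex polarization identity
$$
4 B(u,v) = B(u+v, u+v) - B(u-v, u-v) + i B(u+iv, u+iv) - i B(u-iv, u-iv),
$$
valid for any sesquilinear form $B$, then forces $B_1 = B_2$ on all pairs $(u,v)$ when applied to $B_1 - B_2$. This finishes the proof.

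I don't expect any genuine obstacle: once sesquilinearity is noted the conclusion is automatic. The only point requiring any care is tracking which argument is $\CC$--linear and which is antilinear, but this is unambiguous from the bars on $v$ appearing on both sides of the stated equality.
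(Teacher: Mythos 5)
Your proof is correct and is exactly the route the paper takes: its entire proof of this corollary reads ``Immediate from polarization.'' You have simply made explicit the sesquilinearity check and the polarization identity that the paper leaves implicit.
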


\begin{proof}
  Immediate from polarization.
  \vadjust{\penalty-2000}
\end{proof}

\begin{exam}
In addition to the well-known formula for the square of the norm of a
$(1,1)$--form, Theorem~\ref{thm:norm} gives these formulas for the norms
of higher-degree forms, that have not appeared before to the best of my
knowledge. We a few here, for real forms to simplify notation.

\smallskip
\noindent
(i)\quad
For a real $(2,2)$--form $u$ on $V$ we have
$$
|u|^2 \omega\^{n}
= u^2 \wedge \omega\^{n-4}
- (\Lambda u)^2 \wedge \omega\^{n-2}
+ 3 (\Lambda\^{2} u)^2 \wedge \omega\^{n}.
$$
    
\smallskip
\noindent
(ii)\quad
For a real $(3,3)$--form $u$ our formula gives
$$
|u|^2 \omega\^{n}
= 
- u^2 \wedge \omega\^{n-6}
+ (\Lambda u)^2 \wedge \omega\^{n-4}
- 3 (\Lambda\^{2} u)^2 \wedge \omega\^{n-2}
+ 19 (\Lambda\^{3} u)^2 \wedge \omega\^{n}.
$$

\smallskip
\noindent
(iii)\quad
For a real $(4,4)$--form $u$ we get
$$
\displaylines{
|u|^2 \omega\^{n}
= 
 u^2 \wedge \omega\^{n-8}
- (\Lambda u)^2 \wedge \omega\^{n-6}
+ 3 (\Lambda\^{2} u)^2 \wedge \omega\^{n-4}
\hfill\cr\hfill
{}- 19 (\Lambda\^{3} u)^2 \wedge \omega\^{n-2}
+ 211 (\Lambda\^{4} u)^2 \wedge \omega\^{n}.
}
$$
\end{exam}

Our theorem allows us to express the scalar product of two forms as a
wedge product of forms derived from the original ones. Doing things the
other way around, or expressing a wedge product in terms of inner products
is also possible:

\begin{coro}
    If $u$ and $v$ are $(p,q)$--forms on $V$ and $k = p+q$, then
    $$
    (-1)^{k(k+1)/2} u \wedge \overline{\I v} \wedge \omega\^{n-k}
    = \sum_{m = 0}^n (-1)^{m} 
    \la \Lambda\^m u , \Lambda\^m \overline{\I v} \ra \, \omega\^{n}.
    $$
\end{coro}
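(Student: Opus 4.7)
The plan is to apply the polarized form of Theorem~\ref{thm:norm} (the preceding corollary) to the shifted pair $(\Lambda\^m u, \Lambda\^m v)$, sum over $m$ with alternating signs, and collapse the resulting double sum by the defining recurrence~\eqref{bezel} of the integers $b_l$; this is essentially a M\"obius-type inversion of the theorem.

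First I would record three formal identities. From the convention $A\^k = A^k/k!$ one has $\Lambda\^l \Lambda\^m = \tbinom{l+m}{m}\Lambda\^{l+m}$. Since $\I$ commutes with $\Lambda$ (both being bihomogeneous) and $\Lambda$ commutes with complex conjugation of forms, one has $\overline{\I \Lambda\^m v} = \Lambda\^m \overline{\I v}$. And a direct parity computation gives $(-1)^{(k-2m)(k-2m+1)/2} = (-1)^{k(k+1)/2}(-1)^m$.

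Next I would apply the polarized corollary to $\Lambda\^m u$ and $\Lambda\^m v$, which are $(p-m,q-m)$-forms of total degree $k-2m$; using the three identities above and substituting $j=l+m$ brings the result into the shape
$$
\la \Lambda\^m u, \Lambda\^m \overline{\I v}\ra\,\omega\^n
= (-1)^{k(k+1)/2}\sum_{j \ge m}(-1)^j b_{j-m}\tbinom{j}{m}^2 T_j,
$$
where $T_j := \Lambda\^j u \wedge \Lambda\^j \overline{\I v} \wedge \omega\^{n-k+2j}$. Multiplying by $(-1)^m$, summing over $m$, exchanging the order of summation, and substituting $l = j-m$ in the inner sum, the coefficient of each $T_j$ becomes $(-1)^{k(k+1)/2}(-1)^j \sum_{l=0}^j (-1)^l b_l \tbinom{j}{l}^2$. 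By~\eqref{bezel} this vanishes for $j\ge 1$ and equals $(-1)^{k(k+1)/2}$ for $j = 0$, so only the $T_0$ term survives, yielding $(-1)^{k(k+1)/2} u \wedge \overline{\I v} \wedge \omega\^{n-k}$, as claimed.

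The main obstacle will be just the sign and index bookkeeping; the heart of the argument is the observation that the Bessel-type recurrence~\eqref{bezel} defining $b_l$ is precisely what is needed to make the double sum collapse to its $j = 0$ term.
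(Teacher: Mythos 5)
Your proposal is correct and takes essentially the same route as the paper's own proof: apply the polarized formula to the shifted pair $(\Lambda\^m u, \Lambda\^m v)$, use $\Lambda\^l\Lambda\^m = \binom{l+m}{l}\Lambda\^{l+m}$ and the parity identity, sum over $m$ with alternating signs, reindex, and collapse the double sum via the defining recurrence~\eqref{bezel}. One harmless bookkeeping slip: after the substitution $l = j-m$ the coefficient of $T_j$ is $(-1)^{k(k+1)/2}\sum_{l=0}^{j}(-1)^l b_l\binom{j}{l}^2$ \emph{without} your extra factor of $(-1)^j$ (the two $(-1)^j$'s cancel), but since that sum vanishes for $j\ge 1$ and the factor is $1$ for $j=0$, the conclusion is unaffected.
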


\begin{proof}
We remark that as usual it is enough to prove our statement for
$(p,q)$--forms with $p \leq q$, so we assume this holds. Plugging
$\Lambda\^m u$ and $\Lambda\^m \I v$ into our formula gives
$$
\displaylines{
(-1)^{k_m(k_m+1)/2} 
\la \Lambda\^m u , \Lambda\^m \overline{\I v} \ra \, \omega\^{n}
\hfill\cr\hfill
{} = \sum_{l=0}^{n} 
(-1)^l b_l \, 
\tbinom{l+m}{l}^2
(\Lambda\^{l+m} u) \wedge (\Lambda\^{l+m} \overline{\I v}) 
\wedge \omega\^{n-k+2(l+m)},
}
$$
where we write $k_m = k-2m$. Remark that 
$$
(-1)^{k_m(k_m+1)/2} =
(-1)^{k(k+1)/2} (-1)^m.
$$  
If we sum both sides of the above equation for the scalar product over $m$
from $0$ to $n$ and then change the variable in the first sum from $m$ to
$\nu = l+m$ we get 
$$
\displaylines{
    (-1)^{k(k+1)/2} 
    \sum_{m=0}^n
    (-1)^{m} \la \Lambda\^m u , \Lambda\^m \overline{\I v} \ra \,
    \omega\^{n}
    \hfill\cr\noalign{\vskip-5pt}\hfill
    \jot=0pt
    \eqalign{
    {} &= 
    \sum_{\nu=0}^{n} 
    \Bigl(
    \sum_{l=0}^{\nu}
    (-1)^l b_l \, 
    \tbinom{\nu}{l}^2
    \Bigr)
    (\Lambda\^{\nu} u) \wedge (\Lambda\^{\nu} \overline{\I v}) 
    \wedge \omega\^{n-k+2\nu}
    \cr\noalign{\vskip3pt}{}
    &= u \wedge \overline{\I v} \wedge \omega\^{n-k},
}
}
$$
because $\sum_{l=0}^{\nu} (-1)^l b_l \, \tbinom{\nu}{l}^2 = 0$ for all $\nu \geq 1$ by definition.
\end{proof}

\begin{rema}
Our formula for the inner product is given by numbers related to Bessel
functions. It is possible to write our results in compact form by letting
holomorphic functions define sesquilinear operators on $V$. For this, let 
$$
f(z) = J_0(2\sqrt z) = \sum_{m\geq0} (-1)^m \frac{1}{m!^2} z^m.
$$
This is a Bessel function of the first kind and if we set $z = xy$ and look at its reciproque we find
$$
\frac{1}{f(xy)} = \sum_{l\geq 0} (-1)^l b_l\, x\^l y\^l;
$$
see \cite{Carlitz,Riordan}. This function defines a sesquilinear operator
on $\bw{*} V^*$ if we declare that 
$$
x\^a y\^b(\Lambda,\Lambda) 
:= (u,v) \mapsto \Lambda\^a u \wedge \Lambda\^b \overline v.
$$ 
We can then write
$$
\eqalign{
(-1)^{k(k+1)/2} u \wedge \overline{\I v} \wedge \omega\^{n-p-q}
    &= 
    \pi_{n,n}\Bigl(
    f(xy)(\Lambda,\Lambda)(u,\overline{\I v})
    \wedge \exp(\omega)
    \Bigr),
    \cr
(-1)^{k(k+1)/2}
\la u,v \ra \, \omega\^n
&= 
\pi_{n,n}\Bigl(
\frac{1}{f(xy)}(\Lambda,\Lambda)(u,\overline{\I v})
\wedge \exp(\omega)
\Bigr),
}
$$
where $\pi_{n,n} : \bw{*} V^* \to \bw{n,n} V^*$ is the 
projection onto the subspace of $(n,n)$--forms and $\exp(\omega) :=
\sum_{k\geq0} \omega\^k$. In applications we would most likely consider
differential forms on a manifold $X$ and be interested in the global inner
product $\langle\!\langle u,v \rangle\!\rangle = \int_X \langle u,v\rangle \,
\omega\^n$ instead of the pointwise one. There integration 
kills all but the top-degree forms, so we can write
$$
(-1)^{k(k+1)/2} \langle\!\langle u,v \rangle\!\rangle 
= \int_X \frac{1}{f(xy)}(\Lambda,\Lambda)(u,\overline{\I v})
\wedge \exp(\omega).
$$

Consider now the set $U$ of complexified Hermitian inner products on $V$,
that is the set of $(1,1)$--forms $\alpha + i\omega$, where $\alpha$ and
$\omega$ are real and $\omega$ is an inner product. We have a trivial
holomorphic vector bundle $F^k \to U$ whose fiber is the space of
$k$--forms on $V$, and the inner products $\omega$ define a Hermitian
metric $h$ on $F^k$. The words ``mirror symmetry'' may be waived
around here in certain crowds (but then we should perhaps look at $G^k
\!=\!\!
\smash{\bigoplus\limits_{p - q = n - k} \bw{p,q} V^*}$ instead of $F^k$).

It is tempting to use the differential equation that $J_0$ satisfies
to say something about the curvature tensor of $h$, but in practice this
seems difficult at best and should in fact depend heavily on how the space
of primitive $k$--forms varies with $\omega$ inside $\bw{k}V^*$. The
preprint \cite{HuyArx} may be useful here.
\end{rema}

\section{Curvature tensors and the Kobayashi--L\"{u}bke inequality}
\label{sec:tw}

\noindent
\textbf{Linear algebraic preliminaries}
\hskip1em
In this section, we will (morally speaking) be viewing the curvature form
$R$ as a form defined on the total space of its vector bundle $E$ and
equipping that space with a metric induced by the ones on the underlying
space $X$ and on $E$. This lets us use the Hodge star and Lefschetz
operators on that bigger space and apply the results from
Section~\ref{sec:on}.

Let $V$ and $E$ be complex vector space of dimensions $n$ and $r$, equipped
with Hermitian metrics $\omega$ and $h$. Let $R$ be a curvature-type
tensor, or an element of $\bigwedge^{1,1} V^* \otimes \End E$
that is Hermitian. We view $R$ as a $(2,2)$--form on the space $E \oplus
V$, equipped with the Hermitian metric $\alpha = \omega + h$, where we
abuse notation and do not write $\alpha = p_V^*\omega + p_E^*h$ as we should.

Let $e$ be a form on $E$ and $v$ a form on $V$. By picking orthonormal
coordinates we quickly verify that
$$
*_\alpha(p_V^* v \wedge p_E^* e) 
= p_V^*(*_\omega v) \wedge p_E^*(*_h e).
$$
Since the exterior algebra of $V \oplus E$ is generated by elements of the
type $p_V^* v \wedge p_E^* e$ this lets us calculate with the Hodge star
operator on that space.

Let $\Lambda_\omega$ and $\Lambda_h$ be the adjoints of the Lefschetz
operators of $\omega$ and $h$, pulled back to $V \oplus E$. These
operators commute by general facts on trace operators in
Coffman's~\cite{Coffman} or by calculations in an orthonormal basis.

We also note that Newton's binomial formula gives
$$
\alpha\^{l} = \sum_{k=0}^{l} \omega\^{k} \wedge h\^{l-k}
$$
and that many of those terms will be zero for $l$ big for degree
reasons. A similar formula expresses $\Lambda\^{l}_\alpha$ in terms of
$\Lambda\^{k}_\omega$ and $\Lambda\^{l-k}_h$.

Finally we set $k! c_k := \Lambda\^{k}_h (\bigwedge^k \! R)$ for $k = 0, \ldots,
r$.  The notation is so chosen because when $R$ is the curvature tensor of
an actual Hermitian metric on a vector bundle, the $c_k$ will be the Chern
forms defined by $R$. The $k!$ factor deserves an explanation:

The inner product $h$ is an isomorphism $h : E \to \overline E^*$. It
induces inner products on both $\End E$ and $\bigwedge^{1,1} \! E^*$ and a
morphism $h  \otimes \id_{E^*}: \End E \to \bigwedge^{1,1} \!
E^*$. The trace of an endomorphism of $E$ is just its scalar product
again the identity morphism.  Taking $k$--th exterior powers we get a
canonical morphism
$h^k \otimes \id_{\wedge^k E^*} : \End \bigwedge^k \! E \to
\bigwedge^{k,k} \! E^*$. This morphism is however not an isometry, but
$h\^k$ is; this can be seen by comparing the norms of $\id_{\wedge^k
E}$ and its image $h\^k$. We now want to find a morphism
$\bigwedge^{1,1} \! E \to \bigwedge^{k,k} \! E$ that makes the diagram
$$
\xymatrix@C+10pt{
    \End E \ar[r]^-{f \mapsto \wedge^k f} \ar[d]^{h \otimes \id_{E^*}} & 
    \End \bigwedge^{k} \! E \ar[d]^{h\^k \otimes \id_{\wedge^k E^*}} \\
    \bigwedge^{1,1} \! E \ar@{-->}[r] 
    & \bigwedge^{k,k} \! E
}
$$
commute. This morphism is clearly $u \mapsto u^k / k!$, whence the factor
of $k!$~above.%
\vadjust{\penalty-2000}

\smallskip
\noindent
\textbf{The norm of a curvature tensor}
\hskip1em
Here we calculate the norm of a curvature tensor of a vector
bundle. The identity we find is implicit in the literature on the
Kobayashi--L\"{u}bke inequality (compare with \cite{Chen-Ogiue},
\cite{Lubke} and \cite{Siu}); the inequality is actually a corollary of a
simple application of Cauchy--Schwarz to the equation for the norm of the
curvature tensor.

\begin{theo}
    Let $E \to X$ be a holomorphic vector bundle of rank $r$ over a complex
manifold $X$ of dimension $n$. Let $\omega$ and $h$ be Hermitian metrics on
$X$ and $E$, respectively. Let $\curv$ be the curvature form of $(E,h)$
and let $c_k$ be the Chern forms defined by the curvature form. Then
$$
\Bigl\lvert \curv \Bigr\rvert^2 \omega\^n
= (2 c_2 - c_1^2) \wedge \omega\^{n-2}
+ \Bigl\lvert \tr_\omega \curv \Bigr\rvert^2 \omega\^n
$$
at every point of $X$. If $(E,h)$ is Hermite--Einstein, then we also have
$$
0 \leq 
(2r c_2 - (r-1) c_1^2) \wedge \omega\^{n-2}
$$
pointwise on $X$ with equality if and only if $\curv = (\lambda/n) \id_E
\otimes\, \omega$, where $\lambda$ is the Hermite--Einstein constant of
$(E,h)$.
\end{theo}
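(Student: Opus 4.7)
My plan is to apply the norm formula for real $(2,2)$--forms recorded in Example~(i), but with $V$ replaced by $V \oplus E$ and $\omega$ by the product metric $\alpha = \omega + h$. Under the isometry $h \otimes \id : \End E \to \bw{1,1}E^*$, the curvature form $R = \curv$ becomes a Hermitian $(2,2)$--form on $V \oplus E$ lying in $\bw{1,1}V^* \otimes \bw{1,1}E^*$, and the formula reads
\[
|R|^2 \alpha\^{n+r} = R^2 \wedge \alpha\^{n+r-4} - (\Lambda_\alpha R)^2 \wedge \alpha\^{n+r-2} + 3(\Lambda_\alpha\^2 R)^2 \wedge \alpha\^{n+r}.
\]
Since $R$ has pure bidegree $(1,1)$ in each of the two factors and $\Lambda_\omega$, $\Lambda_h$ commute, $\Lambda_\omega^2 R = \Lambda_h^2 R = 0$, so I can simplify $\Lambda_\alpha R = \tr_\omega R + c_1$ and $\Lambda_\alpha\^2 R = \Lambda_\omega \Lambda_h R = \tr_\omega c_1$ (a scalar).

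Next I would expand each $\alpha\^{n+r-k}$ by Newton's binomial formula into bihomogeneous pieces $\omega\^j \wedge h\^{n+r-k-j}$ and, term by term, extract the unique piece that combines with the given form to produce a top-degree form on $V \oplus E$; every resulting expression then carries a common factor $h\^r$ that can be stripped off. The identities I would invoke along the way are the adjunction $\eta \wedge h\^{r-p} = (\Lambda_h\^p \eta) \cdot h\^r$ for $\eta \in \bw{p,p}E^*$, which follows from the primitive decomposition on $E$ (only the $(0,0)$--primitive component survives under $L_h\^{r-p}$), together with the classical $(1,1)$--form identity applied to $T = \tr_\omega R$, which yields $T^2 \wedge h\^{r-2} = (-|T|^2 + (\Lambda_h T)^2) h\^r$. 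Collecting coefficients, the $\omega\^{n-2} h\^r$ piece contributes precisely $2 c_2 - c_1^2$, while the $\omega\^n h\^r$ piece telescopes as $|\tr_\omega R|^2 - (\tr_\omega c_1)^2 - 2(\tr_\omega c_1)^2 + 3(\tr_\omega c_1)^2 = |\tr_\omega R|^2$. I expect the main technical obstacle to be the bookkeeping of bidegrees, in particular making sure that the cross term $2(\tr_\omega R) c_1$ from expanding $(\Lambda_\alpha R)^2$ contributes only through the mixed piece $\omega\^{n-1} h\^{r-1}$ and cancels cleanly against the remaining $(\tr_\omega c_1)^2$ contributions.

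For the Kobayashi--L\"{u}bke inequality I would decompose $R = (c_1/r) \otimes \id_E + R_0$, where $R_0$ is the trace-free part of $R$ in $\End E$. This decomposition is orthogonal, so $|R|^2 = |R_0|^2 + |c_1|^2/r \geq |c_1|^2/r$ pointwise, with equality iff $R_0 = 0$. Substituting into the identity just proved and eliminating $|c_1|^2$ via $|c_1|^2 \omega\^n = -c_1^2 \wedge \omega\^{n-2} + (\tr_\omega c_1)^2 \omega\^n$ yields
\[
0 \leq \bigl(2c_2 - \tfrac{r-1}{r} c_1^2\bigr) \wedge \omega\^{n-2} + \bigl(|\tr_\omega R|^2 - \tfrac{1}{r}(\tr_\omega c_1)^2\bigr) \omega\^n.
\]
The Hermite--Einstein condition $\tr_\omega R = (\lambda/n)\id_E$ forces $|\tr_\omega R|^2 = \lambda^2 r / n^2$ and $\tr_\omega c_1 = \lambda r / n$, so the second bracket vanishes identically; multiplying through by $r$ leaves $(2rc_2 - (r-1)c_1^2) \wedge \omega\^{n-2} \geq 0$. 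Equality in the Cauchy--Schwarz step forces $R_0 = 0$, so $R = (c_1/r) \otimes \id_E$; combined with Hermite--Einstein this gives $c_1 = (\lambda r/n)\omega$, whence $R = (\lambda/n)\id_E \otimes \omega$, as claimed.
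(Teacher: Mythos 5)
Your proposal follows essentially the same route as the paper: embed $R$ as a $(2,2)$--form on $V\oplus E$ with the product metric $\alpha=\omega+h$, apply the $(2,2)$--norm formula, expand $\alpha\^{l}$ binomially, and reduce the leftover $(\tr_\omega R)^2\wedge h\^{r-2}$ term by the classical $(1,1)$--form identity; your bookkeeping of the three contributions to the $\omega\^n\wedge h\^r$ coefficient matches the paper's term by term. The only genuine variation is in the Cauchy--Schwarz step: you use the trace-free decomposition $R=(c_1/r)\otimes\id_E+R_0$ in $\End E$, whereas the paper (Proposition~\ref{prop:CS}) uses the primitive decomposition of $R$ on $V\oplus E$; these are equivalent, and yours is arguably cleaner since it isolates exactly the component measured by $\lvert c_1\rvert^2\leq r\lvert R\rvert^2$.

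I must flag one step that does not go through as written (and the paper's own proof shares this leap): in the equality analysis, $R_0=0$ gives $\curv=(c_1/r)\otimes\id_E$, i.e.\ projective flatness, and the Hermite--Einstein condition then only yields $\tr_\omega c_1=\mathrm{const}$. Your sentence ``combined with Hermite--Einstein this gives $c_1=(\lambda r/n)\,\omega$'' is not justified: a $(1,1)$--form with constant trace need not be proportional to $\omega$. Indeed for $r=1$ the inequality is the identity $0\leq 0$, equality always holds, yet an Hermite--Einstein metric on a line bundle need not have curvature proportional to $\omega$. The correct pointwise equality condition is projective flatness, $\curv=u\otimes\id_E$ with $\tr_\omega u=\lambda$; upgrading this to $\curv=(\lambda/n)\id_E\otimes\omega$ requires an additional (global or extra-hypothesis) argument that neither you nor the paper supplies.
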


\begin{proof}
   The announced result is local on $X$, so we pick a point $x \in X$ and
write $V = T_{X,x}$, abuse notation to write $E = E_{x}$ and write $R$
for the image of $\curv$ under the isometry $\bigwedge^{1,1} \! T_X
\otimes \End E \to \bigwedge^{2,2} (T_X \otimes E)^*$ defined by $h$. Then
$R$ is a $(2,2)$--form on $V \oplus E$.
We write $\alpha = \omega + h$ for the induced inner product on
$V \oplus E$, in slight abuse of notation.

The norm of $R$ as a $(2,2)$--form on $V \oplus E$ is
$$
|R|^2 \alpha\^{n+r}
= R^2 \wedge \alpha\^{n+r-4}
- (\Lambda_\alpha R)^2 \wedge \alpha\^{n+r-2}
+ 3 (\Lambda\^{2}_\alpha R)^2 \wedge \alpha\^{n+r}.
$$
We'll indicate the general steps in the calculation of each of these
factors but leave the details mostly to the reader.  We have
\begin{align*}
R^2 \wedge \alpha\^{n+r-4}
&= R^2 \wedge \biggl(\sum_{k=0}^4\omega\^{n-k} \wedge h\^{r+k-4}\biggr)
\\
&= R^2 \wedge \omega\^{n-2} \wedge h\^{r-2}
= 2 c_2 \wedge \omega\^{n-2} \wedge h\^{r},
\end{align*}
where the second equality holds for degree reasons. Similarly we get
$$
\displaylines{
(\Lambda_\alpha R)^2 \wedge \alpha\^{n+r-2}
= (\Lambda_\omega R)^2 \wedge \omega\^{n} \wedge h\^{r-2} 
\hfill\cr\hfill{}
+ 2 (\tr_\omega c_1)^2 \omega\^{n} \wedge h\^{r}
+ c_1^2 \wedge \omega\^{n-2} \wedge h\^{r} 
}
$$
because 
$$
\Lambda_\omega R \wedge \Lambda_h R \wedge \omega\^{n-1} \wedge h\^{r-1}
= (\tr_\omega c_1)^2 \omega\^{n} \wedge h\^{r}.
$$
Finally, 
$$
(\Lambda\^{2}_\alpha R)^2 \wedge \alpha\^{n+r} 
= (\Lambda_\omega \Lambda_h R)^2 \wedge \omega\^{n} \wedge h\^{r}
= (\tr_\omega c_1)^2 \wedge \omega\^{n} \wedge h\^{r},
$$
again for degree reasons and commutativity of the adjoints of the Lefschetz
operators. From this we reap
$$
\displaylines{
    \lvert R \rvert^2 \omega\^{n} \wedge h\^{r}
    = (2 c_2 - c_1^2) \wedge \omega\^{n-2} \wedge h\^{r} 
    \hfill\cr\hfill{}
    + (\tr_\omega c_1)^2 \wedge \omega\^{n} \wedge h\^{r}
    - (\Lambda_\omega R)^2 \wedge \omega\^{n} \wedge h\^{r-2}
}
$$
which yields
$$
    \lvert R \rvert^2 \omega\^{n}
    = (2 c_2 - c_1^2) \wedge \omega\^{n-2}
    + (\tr_\omega c_1)^2 \, \omega\^{n}
    - \Lambda\^{2}_h(\Lambda_\omega R)^2 \, \omega\^{n}.
$$
We now use the formula for the norm of a $(1,1)$--form and see that
$$
\lvert \Lambda_\omega R \rvert_h^2
= (\Lambda_h\Lambda_\omega R)^2 - \Lambda\^{2}_h(\Lambda_\omega R)^2
= (\tr_\omega c_1)^2 - \Lambda\^{2}_h(\Lambda_\omega R)^2,
$$
thus obtaining our first announced result (in equivalent notation):
$$
    \lvert R \rvert^2 \, \omega\^{n}
    = (2 c_2 - c_1^2) \wedge \omega\^{n-2}
    + \lvert \Lambda_\omega R \rvert_h^2 \, \omega\^{n}.
$$

Now assume that $(E,h)$ is Hermite--Einstein. By definition, this means
that $\tr_\omega \curv = \lambda \id_{E}$. Under our isometries, this
translates into $\Lambda_\omega R = \lambda h$. The factor $\lambda$
satisfies $r \lambda = \tr_\omega c_1$, so we get
\begin{align*}
    0 \leq 
    \lvert R \rvert^2 \, \omega\^{n}
    &= (2 c_2 - c_1^2) \wedge \omega\^{n-2}
    + r |\lambda|^2 \, \omega\^{n}.
    \\
    &= (2 c_2 - c_1^2) \wedge \omega\^{n-2}
    + \tfrac 1r (\tr_\omega c_1)^2 \omega\^{n}.
\end{align*}
Multiplying by $r$ and rearranging gives 
$$
0 \leq
r \lvert R \rvert^2 \, \omega\^{n}
= (2r c_2 - (r-1)c_1^2) \wedge \omega\^{n-2}
+ \lvert c_1 \rvert^2 \omega\^{n}.
$$
Proposition~\ref{prop:CS} below, which is just the Cauchy--Schwarz
inequality in disguise, says that
$$
\lvert c_1 \rvert^2 \leq r \lvert R \rvert^2,
$$
with equality if and only if $R = u \wedge \omega$, where $u$ is the
pullback of a form on $E$. By the Hermite--Einstein condition we
necessarily have $u = (\lambda/n) h$ in that case. This proves the
Kobayashi--L\"{u}bke inequality.
\end{proof}

\begin{prop}
\label{prop:CS}
We have $\lvert \Lambda_\omega R\rvert^2 \leq n \lvert R \rvert^2$ and
$\lvert c_1 \rvert^2 = \lvert \Lambda_h R\rvert^2 \leq r \lvert R
\rvert^2$, with equalities if and only if $R = u \wedge \omega$ or $R = v
\wedge h$, where $u$ and $v$ are pullbacks of $(1,1)$--forms from $E$ and
$V$, respectively. 
\end{prop}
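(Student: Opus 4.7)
The plan is to prove both inequalities by an orthogonal-decomposition argument that is really just Cauchy--Schwarz in disguise. The tensor $R$ lies in the subspace $\bw{1,1} V^* \otimes \bw{1,1} E^*$ of $\bw{2,2}(V \oplus E)^*$, so the operators $L_\omega, \Lambda_\omega$ act only on the $V$-factor while $L_h, \Lambda_h$ act only on the $E$-factor; in particular $\Lambda_\omega$ annihilates every form pulled back from $E$, and symmetrically for $\Lambda_h$.

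For the first inequality I would set $u := \Lambda_\omega R / n \in \bw{1,1} E^*$ and $R' := R - \omega \wedge u$. The $\mathfrak{sl}(2)$-commutation $[\Lambda_\omega, L_\omega]$ acts as $(n-k)$ on forms of $V$-degree $k$, so applied to an $E$-pure form it is multiplication by $n$. This gives $\Lambda_\omega(\omega \wedge u) = n u$, hence $\Lambda_\omega R' = 0$, and the decomposition is orthogonal because $\la \omega \wedge u, R'\ra = \la u, \Lambda_\omega R'\ra = 0$. The same commutation yields $|\omega \wedge u|^2 = \la u, \Lambda_\omega L_\omega u\ra = n |u|^2$, so
$$
|R|^2 = n|u|^2 + |R'|^2 \qquad\text{and}\qquad |\Lambda_\omega R|^2 = n^2 |u|^2.
$$
Combining these two identities gives $|\Lambda_\omega R|^2 = n|R|^2 - n|R'|^2 \leq n|R|^2$, with equality if and only if $R' = 0$, that is, if and only if $R = \omega \wedge u$ for $u$ a pullback from $E$.

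The second inequality follows by exactly the same argument with $(\omega, V, n)$ replaced by $(h, E, r)$, using $\Lambda_h h = r$ in place of $\Lambda_\omega \omega = n$; the identity $c_1 = \Lambda_h R$ is immediate from the definition $k! c_k = \Lambda\^k_h(\bw{k} R)$ at $k = 1$. The only real bookkeeping obstacle is verifying the mixed $\mathfrak{sl}(2)$-commutation on $V \oplus E$, namely that $[\Lambda_\omega, L_\omega]$ applied to an $E$-pure form behaves exactly as it would on a form of $V$-degree zero (and symmetrically with $\omega$ and $h$ swapped). This follows immediately from the opening observation of the section that the operators associated to $\omega$ and $h$ commute and act only on their respective factors, so once that is invoked the proof is essentially one line in each direction.
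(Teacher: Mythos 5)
Your proof is correct and follows essentially the same route as the paper's: an orthogonal splitting of $R$ into its $\Lambda_\omega$-trace part and a $\Lambda_\omega$-primitive remainder, the commutator identity $\Lambda_\omega L_\omega = n\cdot\mathrm{id}$ on $E$-pure forms, and Pythagoras. The only difference is cosmetic: the paper writes out the full four-term primitive decomposition $R = r_0\,\omega\wedge h + r_1^\omega\wedge h + r_1^h\wedge\omega + r_2$ and then regroups, whereas you split directly into $\omega\wedge(\Lambda_\omega R/n)$ and the rest, which is slightly more economical and makes the equality case immediate.
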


\begin{proof}
    We just prove the first result since the proof of the second
differs from that in notation only.  The primitive decomposition of
$R$ as a $(2,2)$--form on $E \oplus V$ is 
$$
R = r_0 \omega \wedge h + r_1^\omega \wedge h + r_1^h \wedge \omega
+ r_2.
$$
Here $r_0$ is a scalar, $r_1^h$ is a primitive form that's a pullback from
$E$, similar for $r_1^\omega$. By orthogonality we see that $\Lambda_\omega
r_j = 0$ for these primitive forms. This gives 
$\Lambda_\omega R = n (r_0 h + r_1^h)$,
so
\begin{align*}
\lvert \Lambda_\omega R \rvert^2 
&= n^2 (\lvert r_0 h\rvert^2 +\lvert r_1^h\rvert^2)
= n (\lvert r_0 \omega \wedge h\rvert^2 
+ \lvert r_1^h \wedge \omega \rvert^2)
\\
&\leq
n(\lvert r_0 \omega \wedge h \rvert^2 
+ \lvert r_1^h \wedge \omega 
+ r_1^\omega \wedge h \rvert^2 
+ \lvert r_2 \rvert^2 )
= n \lvert R \rvert^2
\end{align*}
with equality if and only if $R = u \wedge \omega$ for a form $u$
that's a pullback from~$E$.%
\end{proof}

\begin{exam}
(1) Let $(X,\omega)$ be a K\"ahler--Einstein manifold, so that $\Ric \omega
= \lambda \omega$ for some $\lambda \in \RR$. Then%
$$
\lvert R \rvert^2 \omega\^n
= c_2 \wedge \omega\^{n-2} + (n \lambda)^2 \omega\^n,
$$
where $c_2$ is the second Chern form defined by $\omega$.

\smallskip
(2) Let $(X,\omega)$ now be a K\"ahler manifold of constant sectional
curvature $\lambda$; like projective space with the Fubini--Study
metric, a torus with its flat metric or the unit ball with the Bergman
metric. Then the curvature tensor of $\omega$ is
$$
R_{jklm} = \lambda(\omega_{jk}\omega_{lm} - \omega_{jl}\omega_{mk})
$$
in local coordinates and we have $\Ric \omega =
\lambda(n-1)\,\omega$. Some calculations give
$$
|R|^2 
= 2n(n-1) \lambda^2,
$$
so we see that
$$
c_2 \wedge \omega\^{n-2}
= -\lambda^2(n-2)(n-1)n(n+1)\,\omega\^n.
$$
\end{exam}

\begin{rema}
The original motivation for all of this was that I didn't understand
where the differential-geometric proofs of the Kobayashi--L\"{u}bke
inequality came from (see \cite{Chen-Ogiue,Lubke,Siu}), since they all
brutally calculate things in local coordinates. I also naively thought that
if I found a more coordinate-invariant proof of the inequality it would be
possible to use it to find inequalities involving higher Chern classes,
because if calculating $|\curv|^2$ gives an inequality involving $c_2$ then
calculating $|\bw{k}\curv|^2$ should give an inequality involving
$c_{2k}$. Unfortunately this does not seem to be possible, basically
because we cannot calculate $\Lambda(u \wedge v)$ or $*(u \wedge v)$
in terms of $u$, $v$ and $\Lambda$ or $*$, which again is not possible
because the wedge product of primitive forms is not primitive. I leave to
the reader the pleasure of trying to estimate $|\bw{k}\curv|^2$ in terms of
things we know and are interested in and seeing where things go wrong.
\end{rema}

\bibliographystyle{amsalpha}
\bibliography{cci}

\end{document}